\def\P{\mathbb{P}}
\def\O{\mathcal{O}}
\def\A{\mathcal{A}}
\def\M{{\mathcal{M}}}
\def\oR{{\mathcal{R}}}
\def\Nm{{\rm Nm}}
\def\im{{\rm Im}}
\begin{document}

\theoremstyle{definition}
\newtheorem{mydef}{Definition}[section]
\newtheorem{mynot}[mydef]{Notation}
\newtheorem*{example}{Example}
\newtheorem*{remark}{Remark}
\theoremstyle{plain}
\newtheorem{myth}{Theorem}
\newtheorem{mypr}[mydef]{Proposition}
\newtheorem{mylem}[mydef]{Lemma}
\newtheorem{mycor}[mydef]{Corollary}

\title{On the principally polarized abelian varieties with $m$-minimal curves}
\author{Shin-Yao Jow \and Adrien Sauvaget \and Hacen Zelaci}
\date{\today}
\maketitle

\abstract{In this paper, we study principally polarized abelian varieties $X$ of dimension $g$ with a curve $\nu:C\to X$ such that the class of $C$ is $m$ times the minimal class. In \cite{Wel}, Welters introduced the formalism of complementary pairs to handle this problem in the case $m=2$. We generalize the results of Welters and construct families of principally polarized abelian varieties for any $m$ and compute the dimension of the locus of these abelian varieties.}

\tableofcontents

\section{Introduction}

Let $g>0$, we will denote by $\A_g$ the \textit{moduli space of complex principally polarized abelian varieties} (ppav). In order to study the geometry of $\A_g$, a classical approach is to construct a stratification using moduli spaces of curves. The first stratum is the locus of Jacobians, which is isomorphic to the moduli space of curves by the Torelli theorem, which is contained in the closure of the Prym locus obtained from the space of unramified coverings of degree two. For example  the space $\A_5$ is uniformized by the space of degree two unramified coverings of a genus $6$ curve (see \cite{Don} and \cite{Bea}). However, the Jacobian locus has dimension $3g-3$ and the Prym locus has dimension $3g$ while the space $\A_g$ has dimension $\frac{g(g+1)}{2}$. Thus, the Jacobian and Prym loci will provide a high codimensions subspaces when $g$ becomes big.

Other constructions using spaces of coverings with more general groups of monodromy have been extensively studied (see \cite{Kan} or \cite{Kan1}); recently this has lead to the uniformization of $\A_6$ (see \cite{Unif6}).

In order to stratify the moduli space $\A_g$, one can generalize the Jacobian and Prym loci. A Prym-Tjurin variety of exponent $m$ is a principally polarized abelian variety $(X,\Theta)$ of dimension $g$ together with a map $\nu:C\to X$ such that
$$\nu_*([C])=m.\frac{\Theta^{g-1}}{(g-1)!}.$$
We will often denote by $z=\frac{\Theta^{g-1}}{(g-1)!}$ the \textit{minimal class}. Note that this  class is not divisible. 

The important point to remark is that, by a theorem of Welters, any ppav is a Prym-Tjurin variety for a certain exponent $m$. Indeed, let $X$ be ppav of dimension $g$, consider a canonical embedding of $X\to \P^{N}$. We can intersect $X$ with $g-1$ sections of the line bundle $\O(k)$. For $k$ sufficiently large, this intersection is not empty. Thus assuming that the intersection is transverse we get a curve in $X$ which is $m$ times the minimal class for a large $m$. Therefore, the loci of Prym-Tjurin varieties of exponent $m$ provide a stratification of the spaces $\A_g$. However, it is difficult to  produce families of Prym-Tjurin varieties of a general $m$.

\paragraph{Complementary pairs.} In \cite{Wel}, Welters proposed a slightly more general version of the Prym-Tjurin varieties. A ppav will be called $m$-minimal if there exists a curve $\nu:C\to X$ such that $\nu_*([C])$ is $m$ times the minimal class; we will say that the curve $C$ is $m$-minimal. The interesting point is that we can reformulate this property using the theory of \textit{complementary pairs}. If $X$ is ppav, we will denote by $\lambda_X:X\to \hat{X}$ the morphism induced by the polarization.  Let $N$ be a smooth curve. Then the following two sets of data are equivalent:
 \begin{enumerate}
 \item A ppav $(X,\Theta)$ and a morphism $\nu:N\to X$ such that $\nu_*([N])=m\cdot z$.
 \item An abelian subvariety $B$ of $JN$ and a finite subgroup $H\subset JN$, such that, if  $\lambda_B$ is the induced polarization  on $B$ by the principal polarization $\lambda_N$ of $JN$ then:
 \begin{enumerate}
 \item $\ker{\lambda_B}\subset H\subset (JN)_m$ ($m-$torsion points of $JN$).
 \item $H$ is maximal totally isotropic subgroup of $(JN)_m$.
 \end{enumerate}
 \end{enumerate}

 We describe a little bit more the construction from 2 to 1. Let $N$ be a smooth curve and $B$ a subvariety satisfying the conditions of 2. Let $A\subset JN$ be obtained by the dual exact sequence:
 \begin{center}
$\xymatrix{
0\ar[r] & \widehat{JN/B} \ar[r] & \widehat{JN}=JN \ar[r] & \widehat{B} \ar[r] & 0 \\
0\ar[r] & A \ar[r] \ar[u]_{\simeq}& JN \ar[r]\ar[u]_{\simeq}^{\lambda_N} & JN/A \ar[u]_{\simeq} \ar[r] & 0.
}$
\end{center}
Thus we get a couple $(A,B)$ of abelian subvarieties of $JN$  satisfying  $A\cap B=\ker \lambda_B=\ker\lambda_A$. This pair is called a {\em complementary pair}. All the statements that follow will be valid if we exchange the roles of $A$ and $B$.

 We denote by $\mu_B:\hat{B}\to B$ the polarization dual to $\lambda_B$. We have  $\lambda_B\circ \mu_B=[m]_B$ and the following sequence is exact $$0\to \ker \lambda_B\to B_m \to \ker \mu_B\to 0.$$
 In particular, in the condition $2$, we can replace the datum of $H$ a maximal totally isotropic (m.t.i.) subgroup of $B_m$ containing $\ker(\lambda_B)$ by the choice of a m.t.i. subgroup $K$ of  $\ker{\mu_B}$.

We denote by $\tau$ the natural isogeny from $A\times B \to JN$ which maps $(a,b)$ to $a+b$. The kernel of $\tau$ is  given by $\{(x,-x),x \in A\cap B\}$. We define $j:JN\to JN$ the unique morphism which makes  the following diagram commutative
 \begin{center}
$\xymatrix{
JN \ar[r]^j  & JN \\
A\times B \ar[u]^\tau \ar[r]_{\left(\begin{smallmatrix} 1 & 0 \\ 0 & 1-m \end{smallmatrix}\right)} & A\times B \ar[u]_\tau.
}$
\end{center}
Then $A=\im(j+m-1)=\ker(1-j)^0$ and $B=\im(1-j)=\ker(j+m-1)^0$ (we can permute the roles of $A$ and $B$ by taking $j'=m-2-j$). The morphism $j$ satisfies the classical Prym-Tjurin property $(j-1)(j-m+1)=0$.

With this set-up, the variety $X$ will be defined as $\widehat{B}/K\simeq (JN/A)/K$ and the map $\nu$ from $N$ to $X$ is given by the composition of maps
$$N\to JN\to JN/A\to (JN/A)/K.$$ 
We denote by $u$ the map from $JN$ to $X$ and by $u^t=\lambda_N^{-1} \circ u^* \circ  \lambda_X$. Then we have $uu^t=[m]$. The class of $N$ in $JN$ is the minimal class of $JN$, therefore  the class $\nu_*([N])$ is equal to $m$ times the minimal class.

\paragraph{Case $m=2$.} We have three different constructions in the case $m=2$.
\begin{enumerate}
\item Quotients of Jacobians. We take $B=JN$ and $H$ is any maximal isotropic subgroup of $JN_2$.
\item Quotients of Prym varieties. Let $\pi: N\to N_0$ be a double covering.  We denote by $\pi^*: JN_0\to JN$ and by $\Nm_\pi$ (or simply $\Nm$ if the context is clear) the norm map associated to $\pi$. Moreover, the covering comes with an involution $\iota:N\to N$. Then by taking  $B=\ker(\Nm)_0$ (the neutral component) the Prym variety attached to $\pi$, $A=\pi^*(JN_0)$ and $j=\iota^*$ we get an $m$-minimal curve by choosing a m.t.i. subgroup of $\ker(\mu_B)$.
\item Quotient of pull-back of Jacobians. This construction is obtained by exchanging the roles of $A$ and $B$. It is knowing that $\ker(\mu_B)$ is non-trivial if and only if $\pi$ is unramified.
\end{enumerate}
Welters proved that any $2$-minimal ppav is  of one of the three above types.

\paragraph{Statement of the results.} The current paper has two purposes. The first one is to compute the dimension of loci defined by Welters in the case $m=2$ inside the moduli space of ppav. The expected dimensions for quotients of Jacobians is $3g-3$. For the Prym and inverse Prym varieties, the expected dimension is the dimension of the Hurwitz space classifying the covering $N\to N_0$.
\begin{mypr}
The dimensions of all the families of ~$2$-minimial ppav are the expected ones. In particular the dimension of the locus of ~$2$-minimal ppav in $\A_g$ is $3g$ (obtained from the Prym varieties of unramified coverings).
\end{mypr}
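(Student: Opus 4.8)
The plan is to exhibit each of Welters' three families as the image of an explicit modular map to $\A_g$ and to reduce the computation of its dimension to the dimension of an easily understood parameter space, the obstruction being the generic finiteness of these maps. In every construction $X=\widehat{B}/K$ with $\dim X=\dim B$, and $K$ runs over the finite set of maximal totally isotropic subgroups of $\ker\mu_B$; thus the passage to the quotient by $K$ is a finite operation and the choice of $K$ contributes nothing to the dimension. Consequently the dimension of each family equals the dimension of the image of the map $[N]\mapsto[X]$ (resp. $[N\to N_0]\mapsto[X]$), and it suffices to (i) compute the dimension of the source and (ii) show the map has finite generic fibres.

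For the dimension of the sources I would use Riemann--Hurwitz. The family of quotients of Jacobians is parametrised by $\mathcal{M}_g$ together with the finite datum $H$, so its source has dimension $3g-3$. For a connected double covering $N\to N_0$ with $N_0$ of genus $h$ and $2r$ branch points, $N$ has genus $2h-1+r$ and $B=\ker(\Nm)_0$ has dimension $h-1+r$; in the Prym construction $\dim X=\dim B=h-1+r$, and the Hurwitz space of such coverings has dimension $3h-3+2r$. Setting $h-1+r=g$ gives source dimension $3g-r$, which is largest, equal to $3g$, precisely in the \'etale case $r=0$, where $h=g+1$ and the source is $\mathcal{R}_{g+1}$ of dimension $3g$. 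For the inverse Prym construction the roles of $A$ and $B$ are exchanged, the nontrivial quotient forces $\pi$ to be \'etale, $\dim X=\dim A=h=g$, and the source $\mathcal{R}_g$ has dimension $3g-3$. Thus the family of largest dimension is that of \'etale Prym varieties.

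For the quotient-of-Jacobians family finite generic fibres are immediate: $X=JN/H$ is isogenous to $JN$ by an isogeny of fixed type, so $JN$ is one of finitely many principally polarized abelian varieties mapping to $X$, and the Torelli theorem then recovers $N$ up to finite ambiguity. The same recovery strategy reduces the Prym and inverse Prym cases to the generic finiteness of the (possibly ramified) Prym map, which is the heart of the matter.

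The main obstacle is therefore to prove that the Prym map is generically finite in the relevant range. I would establish this infinitesimally, by analysing its codifferential: identifying $H^0(X,\Omega^1_X)$ with the Prym-canonical space $H^0(N_0,\omega_{N_0}\otimes\eta)$, where $\eta$ is the two-torsion class defining the \'etale covering, the codifferential of the Prym map is the multiplication map $\mathrm{Sym}^2 H^0(N_0,\omega_{N_0}\otimes\eta)\to H^0(N_0,\omega_{N_0}^{\otimes 2})$. Its surjectivity at a general point forces the differential of the Prym map to be injective, hence the map to be generically finite; this surjectivity holds for $g\ge 5$, which is exactly the range in which $3g\le\dim\A_g$ and the locus is proper (Welters, Kanev, Friedman--Smith). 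The ramified and inverse cases follow by transporting the same computation through the isogeny $\tau:A\times B\to JN$. Combining (i) and (ii), all three families have the expected dimensions, and the largest of them, coming from \'etale Prym varieties, has dimension $3g$, which is therefore the dimension of the locus of $2$-minimal ppav in $\A_g$.
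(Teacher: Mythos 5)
Your proposal is correct in outline and follows the same two-step reduction that the paper distributes over Sections~2--4: by Welters' classification every $2$-minimal ppav arises from one of the three constructions, the finite datum $K$ (or $H$) contributes nothing to the dimension, and everything reduces to the dimension of the parameter space ($\mathcal{M}_g$, the Hurwitz/Prym spaces $\mathcal{R}$) together with the generic finiteness of the induced map to $\A_g$; your Riemann--Hurwitz bookkeeping identifying $3g$, attained exactly by \'etale double covers, agrees with the paper's. Where you genuinely diverge is in how the key generic finiteness is established. The paper asserts that $(X,K)\mapsto X/K$ is \'etale on moduli (Proposition~\ref{dimquot}, stated without proof) and then simply cites Lange--Ortega for the generic finiteness of the (cyclic, possibly ramified) Prym map; you instead argue infinitesimally, identifying the codifferential of the Prym map with the multiplication map $\mathrm{Sym}^2 H^0(N_0,\omega_{N_0}\otimes\eta)\to H^0(N_0,\omega_{N_0}^{\otimes 2})$ and invoking its generic surjectivity for $g\ge 5$. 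Your route is more self-contained and makes explicit the restriction $g\ge 5$, which the statement actually requires (for $g\le 4$ the Prym map is dominant and the locus is all of $\A_g$, of dimension less than $3g$); the paper glosses over this. Conversely, the paper's citation covers the ramified families uniformly, whereas your remark that those cases ``follow by transporting the computation through $\tau$'' is only a sketch --- though, as you note, for the Jacobian-quotient and inverse-Prym families Torelli alone gives finiteness, so the codifferential argument is only truly needed for the \'etale Prym family, which is the one realizing the maximum $3g$.
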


The second purpose is to study the generalization of these three families of $m$-minimal ppav for $m \geq 3$. Two important questions about $m$-minimal ppav arise:
\begin{enumerate}
\item Compute the dimension of the locus of $m$-minimal varieties in $\A_g$.
\item Fixing $m$ and $g$, what are the bound on the geometric genus of $N$ if we assume that the image of $N$ is irreducible? For $m=1$, Matsusaka criterion (see \cite{mat}) imposes that $g(N)=g$. For $m=2$, the classification of $2$-minimal curves (see \cite{Wel}) implies that $g\leq g(N) \leq 2g+1$ and that these bounds can be realized. For general $m$, the lowest bound is $g$ (and we will provide examples). However the existence (and the value) of an upper bound is still open even for $m=3$. We expect this upper bound to take the form $g^+(g,m)=mg+P(m)$ where $P$ is a polynomial.
\end{enumerate}

We will prove the following
\begin{myth}
For any $m$, there exists $(3g-3)$-dimensional families of $m$-minimal ppav of dimension $g$ such that the geometric genus of the curve is given by $g$ (see Section \ref{sec:quot}) and $mg-m+1$ (see Section \ref{sec:invprym}).
\end{myth}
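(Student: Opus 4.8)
The plan is to realize both families through the complementary-pairs dictionary of the introduction and then to compute their dimensions by a parameter count (upper bound) together with a generic-finiteness argument for the resulting moduli map to $\A_g$ (lower bound). For the family of Section \ref{sec:quot} I would take $N$ a smooth curve of genus $g$ and set $B=JN$, so that the complementary subvariety $A$ is trivial. Since $\lambda_B=\lambda_N$ is principal we have $\ker\lambda_B=0$, and the exact sequence $0\to\ker\lambda_B\to B_m\to\ker\mu_B\to 0$ identifies $\ker\mu_B\simeq(JN)_m$; choosing a maximal totally isotropic $K\subset(JN)_m$ (of order $m^g$) and applying the construction of the introduction yields $X=JN/K$ of dimension $g$ together with $\nu:N\to JN\to X$ of class $m\cdot z$. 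Thus $X$ is $m$-minimal with a curve of geometric genus $g$.

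For the family of Section \ref{sec:invprym} I would take $N_0$ a smooth curve of genus $g$, fix a line bundle $\eta\in(JN_0)_m$ of exact order $m$, and let $\pi:N\to N_0$ be the associated connected unramified cyclic cover of degree $m$. Riemann--Hurwitz gives $2g(N)-2=m(2g-2)$, i.e.\ $g(N)=mg-m+1$. Setting $B=\pi^*(JN_0)$ (of dimension $g$, since $\ker\pi^*=\langle\eta\rangle$ is finite) and $A=\ker(\Nm_\pi)_0$ (the generalized Prym variety, of dimension $(m-1)(g-1)$) produces a complementary pair inside $JN$; here $\ker\mu_B$ is nontrivial precisely because $\pi$ is unramified, and a maximal totally isotropic $K\subset\ker\mu_B$ gives $X=\hat{B}/K$ of dimension $g$ with $\nu:N\to X$ of class $m\cdot z$, hence a curve of geometric genus $mg-m+1$. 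Verifying conditions (a)--(b) of the dictionary for these data — in particular that the induced polarization $\lambda_B$ has the correct type, so that $\ker\lambda_B\subset B_m\subset(JN)_m$ — is the routine input.

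In both cases the auxiliary curve ($N$, resp.\ $N_0$) varies in $\M_g$, of dimension $3g-3$, while the remaining choices are discrete: the isotropic subgroup $K$, and in the second construction the torsion class $\eta$ cutting out the cover. Hence the source of the moduli map $(N,K)\mapsto X$ (resp.\ $(N_0,\eta,K)\mapsto X$) has dimension $3g-3$, and it suffices to prove that this map is generically finite onto its image.

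The main obstacle is this generic finiteness, which supplies the lower bound on the dimension: I must exclude that a positive-dimensional family of inputs yields a single ppav $X$. The argument I would give is that $K$ has exponent dividing $m$, so $X$ determines $JN$ (resp.\ $B$, and hence $JN_0$ up to isogeny) up to an isogeny whose kernel is annihilated by $m$; as $(JN)_m$ (resp.\ $(JN_0)_m$) is finite there are only finitely many abelian varieties $m$-isogenous to $X$, and by the Torelli theorem each Jacobian among them pins down the underlying curve up to finitely many choices. Consequently the fibers of the moduli map are generically finite and its image has dimension exactly $3g-3$. The remaining point to check is that $\nu$ is generically injective, so that the geometric genus of the image curve coincides with $g(N)$; this holds for generic parameters since $N\to JN$ is an embedding and the \'etale isogeny $JN\to X$ identifies only finitely many points.
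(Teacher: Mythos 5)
Your first family (quotients $JC/K$ of Jacobians) matches the paper: the paper proves generic injectivity of $C\to JC/K$ by observing that otherwise the normalization of the image would have genus $<g$ while the image must generate the $g$-dimensional $X$ (its Lemma 2.1), and gets the dimension from the assertion that $(X,K)\mapsto X/K$ is \'etale; your finiteness-of-isogenies argument is an acceptable substitute for the dimension count. (Your phrase ``the \'etale isogeny $JN\to X$ identifies only finitely many points'' is not literally true---the isogeny is $m^g$-to-one everywhere---but the intended statement, that $\alpha(N)\cap(\alpha(N)+k)$ is finite for $0\neq k\in K$, can be justified; the paper's genus argument is cleaner and works for every $C$.)

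The genuine gap is in the second family. There $v\colon N\to X$ is \emph{not} the restriction of an isogeny: it factors as $N\hookrightarrow JN\to JN/A\to (JN/A)/K$ with $A=(\ker\Nm)_0$ of positive dimension $(m-1)(g-1)$, so your injectivity argument does not apply, and the conclusion is in fact false for some maximal totally isotropic $K$. Concretely, $\ker\Nm$ has $m$ components $P_0=A,P_1,\dots,P_{m-1}$, and $\mathcal{O}_N\bigl(x-\sigma^\ell(x)\bigr)\in P_\ell$, so $x$ and $\sigma^\ell(x)$ have the same image in $X$ exactly when $\ell P_1\in K$; for instance $K=\langle P_1\rangle$ is a maximal totally isotropic subgroup of $\ker\mu_B\cong(\mathbb{Z}/m\mathbb{Z})^2$ for which $v$ factors through $N_0$ and the image curve has genus at most $g$, not $mg-m+1$. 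The paper's Proposition 3.1 is devoted precisely to this verification: it computes $\ker\mu_B=\langle\bar{\xi},P_1\rangle$ with $\Nm(\xi)=\eta$, shows that $v$ is birational onto its image if and only if $\ell P_1\notin K$ for all $\ell\neq 0$, and exhibits the good choice $K=\langle\bar{\xi}\rangle$. Your proposal asserts that an arbitrary maximal totally isotropic $K$ yields a curve of geometric genus $mg-m+1$, which skips the one nontrivial step; without it the claimed genus is not established. (A minor secondary point: here $X\cong JN_0/[m]^{-1}\langle\eta\rangle$, so the isogeny from $JN_0$ has kernel killed by $m^2$ rather than $m$; your finiteness argument survives with $X_{m^2}$ in place of $X_m$.)
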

The dimension of these families should be far from the dimension of the locus of $m$-minimal ppav. However, these families provide an inequality $g^+(g,m)\geq mg-m+1$ (which is close to the expected one for big values of $g$).

\paragraph{Acknowledgement} We would like to thank Juan Carlos Naranjo for having proposed this subject and for his many helpful advices. We are very grateful to V\'ictor Gonz\'alez-Alonso for his help all along the work. Finally we would like to thank Alfio Ragusa,  Francesco Russo and Giuseppe Zappal\`a, the organizers of the school PRAGMATIC, for having created the environment  for the present work.

\section{Quotient of Jacobians}\label{sec:quot}
Let $\mathcal M_g$ be the moduli space of smooth curves of genus $g$. The {\em Torelli map} associates to an element $[C]\in\mathcal M_g$ the class of the Jacobian $JC$ in $\mathcal A_g$. The Torelli map is injective. Moreover it is an immersion outside the locus of hyperelliptic curves.

Let $C$ be a smooth curve of genus $g$. Let $K\subset JC_m$ be a m.t.i subgroup of $JC$ with respect to the Riemann bilinear form inside the $m$ torsion point of $JC$. Then the quotient $JC/K$ is a ppav and the composition $C\hookrightarrow JC \to X$ gives an $m$-minimal curve of $X$.

\begin{mylem} For all $[C]\in \mathcal M_g$ the map $f:C\rightarrow X$ is generically injective.
\end{mylem}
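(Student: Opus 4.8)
The plan is to translate generic injectivity of $f$ into a statement about the \emph{difference map} of $C$ and then to bound the fibers of that map over the finite group $K$. Fix a base point $p_0$ and write $\alpha:C\hookrightarrow JC$, $p\mapsto[p-p_0]$, for the Abel--Jacobi embedding, so that $f=u\circ\alpha$ where $u:JC\to X=JC/K$ is the quotient. Since $\ker u=K$ and $\alpha(p)-\alpha(q)=[p-q]$ is independent of the base point, we have $f(p)=f(q)\iff [p-q]\in K$. Hence $f$ fails to be injective exactly on the set of off-diagonal pairs
$$S=\{(p,q)\in C\times C\ :\ p\neq q,\ [p-q]\in K\}.$$
If I can show $S$ is finite, then its image $T=\pi_1(S)\cup\pi_2(S)\subset C$ is finite and $f$ is injective on the dense open $C\setminus T$, which is precisely generic injectivity. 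So the whole problem reduces to proving that $S$ is $0$-dimensional.

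To do this I would introduce the difference map $d:C\times C\to JC$, $(p,q)\mapsto[p-q]$, and note that $S=d^{-1}(K)\setminus\Delta$ where $\Delta$ is the diagonal. Because $K$ is finite, it suffices to control each fiber $d^{-1}(\kappa)$ for $\kappa\in K$. First, $d^{-1}(0)=\{(p,q):p\sim q\}=\Delta$, since for $g\geq 1$ a relation $p\sim q$ with $p\neq q$ would produce a $g^1_1$ and force $C\cong\P^1$. Next, for a fixed $\kappa\neq 0$, the relation $\alpha(p)=\alpha(q)+\kappa$ and the injectivity of $\alpha$ show that the assignment $(p,q)\mapsto\alpha(p)$ identifies $d^{-1}(\kappa)$ with the intersection
$$W_1\cap(W_1+\kappa),\qquad W_1:=\alpha(C)\subset JC,$$
i.e. the intersection of the Abel--Jacobi curve with its nonzero translate. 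As $W_1$ and $W_1+\kappa$ are irreducible curves, this intersection is finite unless the two curves coincide, so everything comes down to showing $W_1+\kappa\neq W_1$ for every $\kappa\neq 0$.

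The hard part is therefore precisely this last point: the translation stabilizer $\{x\in JC:\ x+W_1=W_1\}$ is trivial for $g\geq 2$. I would argue that any such $x$ induces, via $p\mapsto p+x$, an automorphism of $C$ that corresponds to the translation $t_x$ of $JC$; since translations act trivially on $H^0(JC,\Omega^1)=H^0(C,\Omega^1)$, this automorphism acts trivially on holomorphic differentials. For $g\geq 2$ the representation of $\mathrm{Aut}(C)$ on $H^0(C,\Omega^1)$ is faithful --- in the non-hyperelliptic case via the canonical embedding, and in the hyperelliptic case because the only nontrivial automorphism fixing the canonical image is the hyperelliptic involution, which acts by $-1$ on differentials --- so the automorphism is the identity and hence $x=0$. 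This gives $W_1\cap(W_1+\kappa)$ finite for all $\kappa\neq 0$; taking the union over the finitely many elements of $K$ shows $S$ is finite, and the reduction in the first paragraph finishes the proof. The only genuinely nontrivial input is the faithfulness of the differential action (equivalently the triviality of the stabilizer), which may be quoted as a classical fact; the remaining steps are elementary bookkeeping about $d$ and its fibers.
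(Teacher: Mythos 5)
Your proof is correct (for $g\ge 2$, which the statement implicitly requires anyway: for $g=1$ and $m\ge 2$ the map $f$ is an isogeny of degree $|K|=m$ and the lemma fails), but it follows a genuinely different route from the paper's. The paper argues by contradiction with a two-line genus count: if $f$ is not birational onto its reduced image $\tilde C$, then $f$ factors through a degree $\ge 2$ covering of the normalization of $\tilde C$, whose genus is therefore strictly smaller than $g$ by Riemann--Hurwitz; on the other hand $JC\to X$ is an isogeny, so $\tilde C$ generates $X$ and the Jacobian of its normalization surjects onto $X$, forcing that genus to be at least $\dim X=g$, a contradiction. That argument uses nothing about $K$ beyond finiteness and applies verbatim to the composition of any Abel--Jacobi embedding with any isogeny. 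Your argument instead pins down the non-injectivity locus exactly: $f(p)=f(q)$ if and only if $[p-q]\in K$, so the failure locus is $\bigcup_{\kappa\in K\setminus\{0\}}W_1\cap(W_1+\kappa)$, and everything reduces to the classical fact that the translation stabilizer of $W_1=\alpha(C)$ in $JC$ is trivial for $g\ge 2$. Your proof of that fact via the faithfulness of $\mathrm{Aut}(C)\to\mathrm{GL}\bigl(H^0(C,\Omega^1_C)\bigr)$ is sound (the hyperelliptic involution acts by $-1$, not $+1$); one could also deduce it from the triviality of the stabilizer of the theta divisor $W_{g-1}=W_1+\cdots+W_1$, which holds because the polarization is principal. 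In short, the paper's proof is shorter and more robust, while yours is more informative: it exhibits the precise finite set on which injectivity fails rather than merely establishing generic injectivity.
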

\begin{proof} If we assume that $f:C\to X$ is not generically injective. Denote by $\tilde{C}$ the reduced image of $f$ in $X$. Then $f$ is a covering (possibly ramified) on $\tilde{C}$,  hence the genus of the normalisation of $\tilde{C}$ is strictly smaller than $g_C$.
On the other hand, by definition, $JC\longrightarrow X$ is an isogeny, it follows that the curve $\tilde{C}$ generates $X$ (as a group). So the genus of the normalisation of $\tilde{C}$ is at least $dim(X)$, a contradiction.
\end{proof}

Let $g>0$. Let $\delta=(d_1,\ldots,d_g)$ be a polarization type. We will denote by $A_g^\delta$ the {\em moduli of abelian varieties with polarization of type $\delta$}. We define the following moduli space

\begin{mydef}
Let $g>0$ and $\delta$ be a polarization type. For $m>0$, we define $A_{g,m}^\delta$ to be the moduli space of pairs $(X,K)$ where $X$ is an abelian variety of dimension $g$ with polarization type $\delta$ and $K$ is a m.t.i. of the group of $m$-torsion points. 
\end{mydef}
The forgetful map $A_{g,m}^\delta \to A_{g}^\delta$ which maps $(X,K)$ to $X$ is \'etale. Thus we have $\dim(A_{g,m}^\delta)=\frac{g(g+1)}{2}$.  We have also
\begin{mypr}\label{dimquot}
Let $f: A_{g,m}^\delta \to A_g$ be the map which sends a pair $(X,K)$ to the quotient $X/K$. The map $f$ is \'etale.
\end{mypr}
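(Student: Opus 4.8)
The plan is to exhibit $f$ as an isomorphism on tangent spaces at every geometric point and then to conclude étale-ness from the fact that source and target are smooth stacks of the same dimension.

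First I would record the two dimension counts. We are given $\dim A_{g,m}^\delta=\frac{g(g+1)}{2}$, and the moduli stack $A_g$ is smooth of the same dimension $\frac{g(g+1)}{2}$. Both $A_{g,m}^\delta$ and $A_g$ being smooth Deligne--Mumford stacks, a morphism between them is étale as soon as its differential is an isomorphism at each point (by smoothness together with the equality of dimensions, an unramified such morphism is automatically flat, hence étale). So the whole content is to compute $df$ and show it is bijective.

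Next I would identify the differential by deformation theory. Write $Y=X/K$ and let $\pi\colon X\to Y$ be the quotient isogeny. In characteristic $0$ the isogeny $\pi$ is étale and $K$ is a constant étale group scheme; consequently, over any Artinian thickening a deformation of $X$ carries a \emph{unique} deformation $\mathcal K$ of the subgroup $K$. Thus first-order deformations of the pair $(X,K)$ coincide with first-order deformations of the polarized abelian variety $X$ (this is exactly the assertion that the forgetful map $A_{g,m}^\delta\to A_g^\delta$ is étale), and $df$ sends such a first-order deformation $\mathcal X$ to the deformation $\mathcal X/\mathcal K$ of $Y$. To see that this is bijective I would pass to the analytic picture: $d\pi$ identifies $V:=\mathrm{Lie}(X)$ with $\mathrm{Lie}(Y)$, and under this identification $X=V/\Lambda_X$ and $Y=V/\Lambda_Y$ with fixed lattices $\Lambda_X\subset\Lambda_Y$ satisfying $\Lambda_Y/\Lambda_X\cong K$. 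A first-order polarized deformation is then a first-order deformation of the complex structure on the fixed underlying real space of $V$ compatible with the polarization form; one and the same such datum deforms $X$ and $Y$ simultaneously, so under $\mathrm{Lie}(X)=\mathrm{Lie}(Y)$ the map $df$ becomes the identity. Equivalently, the isomorphisms $\pi^*\colon H^1(Y,\mathcal O_Y)\xrightarrow{\sim}H^1(X,\mathcal O_X)$ and $d\pi$ induce $H^1(Y,T_Y)\cong H^1(X,T_X)$, and $df$ is this isomorphism restricted to the symmetric, polarization-preserving, subspaces.

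The main obstacle, and the one point that genuinely requires checking, is the compatibility of the two polarizations under the identification $\mathrm{Lie}(X)=\mathrm{Lie}(Y)$. One must verify that $K$ being maximal totally isotropic in $X_m$ is precisely the condition for the polarization of $X$ to descend to a principal polarization on $Y$ (so that $f$ is well defined with target $A_g$), and that the pulled-back form $\pi^*\lambda_Y$ agrees with $\lambda_X$ up to the scalar $m$. Granting this, the Hermitian (equivalently, symmetric) forms on $V$ that govern polarized deformations of $X$ and of $Y$ are proportional, so their two $\mathrm{Sym}^2$-subspaces of $H^1(\cdot,T)$ coincide under the isomorphism above; hence $df$ is an isomorphism at every point and $f$ is étale.
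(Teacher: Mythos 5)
The paper states this proposition without any proof: the text passes directly from the statement to its application, so there is no argument of the authors to compare yours against, and your write-up fills a genuine gap in the source. Your argument is correct and is the one a reader would expect. The three ingredients all hold: source and target are smooth of the common dimension $\frac{g(g+1)}{2}$, so bijectivity of $df$ at every point suffices in characteristic zero; first-order deformations of $(X,K)$ agree with polarized first-order deformations of $X$ because the finite \'etale subgroup $K$ deforms uniquely over Artinian thickenings (this is the \'etaleness of $A_{g,m}^\delta\to A_g^\delta$ that the paper also asserts without proof); and, writing $X=V/\Lambda_X$ and $Y=X/K=V/\Lambda_Y$ with $\Lambda_X\subset\Lambda_Y$ fixed, the period-domain conditions cutting out polarized deformations of $X$ and of $Y$ inside $H^1(\cdot,T)$ coincide because the two alternating forms on $V_{\mathbb R}$ are proportional ($\pi^*\lambda_Y=m\lambda_X$), so $df$ is the identity under $\mathrm{Lie}(X)\cong\mathrm{Lie}(Y)$. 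The one step you explicitly ``grant''---that $K$ maximal totally isotropic in $X_m$ is precisely what makes $m\lambda_X$ descend to a \emph{principal} polarization on $X/K$---is a standard fact (see \cite{BirLan}) in the principally polarized case $\delta=(1,\dots,1)$, and you are right to flag it as the point where well-definedness of $f$ lives: for a general type $\delta$ the statement as printed needs a compatibility between $\delta$, $m$ and the order of $K$ for $X/K$ to be principally polarized at all, but that is an imprecision in the paper's formulation rather than a defect of your argument.
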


Applied to the principaly polarized varieties, the dimension of the locus of $m$-minimal abelian varieties defined by a pair $(JC,K)$ is the same as the dimension of the Jacobian locus: $3g-3$.

\section{Quotient of pull-back of Jacobians}\label{sec:invprym}

Let $\pi\colon N\to N_0$ be a finite map of degree~$m$ between smooth projective curves $N$ and $N_0$. Let $A=(\ker \Nm)_0$ and $B=\pi^*JN_0$. Then $(A,B)$ is a complementary pair in $JN$ with $A\cap B\subset JN_m$. By Welters construction \cite[Proposition~1.17]{Wel}, given a maximal totally isotropic subgroup $K$ of $\ker \mu_B$, one obtains a ppav $X=B'/K$ (where $B'=JN/A\cong B^\vee$) together with a morphism $v\colon N\to X$ such that $v_*[N]=mz$, where $z=\Theta^{\dim X-1}/(\dim X-1)!$ is the minimal class. The following proposition describes those ppav $X$ obtained in this way such that the morphism $v\colon N\to X$ is birational onto its image.

\begin{mypr} \label{p:jow}
With notations as above, the following statements hold.
\begin{enumerate}
  \item Suppose that $m$ is prime. If the morphism $v\colon N \to X$ is birational onto its image, then $\pi\colon N\to N_0$ is an unramified cyclic cover. Moreover, let $\eta\in (JN_0)_m$ be the $m$-torsion line bundle on $N_0$ attached to $\pi$. Then $X\cong JN_0/[m]^{-1}\langle\eta\rangle$, where $\langle\eta\rangle$ is the subgroup  generated by $\eta$ in $JN_0$, and $[m]^{-1}\langle\eta\rangle$ is its preimage under the morphism $[m]\colon JN_0\to JN_0$.
  \item Conversely, let $m$ be any positive integer, $\eta$ be any $m$-torsion line bundle on $N_0$, and let $\pi\colon N=\mathbf{Spec}(\mathcal{O}_{N_0}\oplus \eta^{-1})\to N_0$ be the unramified cyclic cover associated to $\eta$. Then there exists a maximal totally isotropic subgroup $K$ of $\ker \mu_B$ such that $v\colon N \to X$ is birational onto its image and $X\cong JN_0/[m]^{-1}\langle\eta\rangle$.
\end{enumerate}
\end{mypr}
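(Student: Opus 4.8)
The plan is to read off the behaviour of $v$ from the fibre product $N\times_{N_0}N$. Its off-diagonal part carries a difference map $\delta\colon(p,q)\mapsto[p-q]$, and $\Nm[p-q]=[\pi(p)-\pi(q)]=0$ shows $\im\delta\subset\ker\Nm$. Writing $\widetilde K\subset JN$ for the preimage of $K$ under $JN\to JN/A$, one has $A\subset\widetilde K$ and $v(p)=v(q)$ iff $[p-q]\in\widetilde K$. Since $\widetilde K\cap\ker\Nm$ is a subgroup containing $A=(\ker\Nm)_0$, it is a union of connected components of $\ker\Nm$; hence for each connected component $Z_\alpha$ of $(N\times_{N_0}N)\smallsetminus\Delta$ the image $\delta(Z_\alpha)$ lies in a single class $c_\alpha\in\pi_0(\ker\Nm)$, and $\delta(Z_\alpha)\subset\widetilde K$ holds either for all of $Z_\alpha$ or for none, according to whether $c_\alpha$ lies in $\overline K:=(\widetilde K\cap\ker\Nm)/A=K\cap\pi_0(\ker\Nm)$. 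The criterion I will use throughout is: $v$ is birational onto its image if and only if $c_\alpha\notin\overline K$ for every off-diagonal component $Z_\alpha$.

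For part (1) I first show $\pi$ is unramified. If $\pi$ ramifies, then above a branch point the fibre product has a non-diagonal branch through the diagonal $\Delta$ (locally $s_1^e=s_2^e$ yields components such as $\{s_1=\zeta s_2\}$ meeting $(0,0)$), so the corresponding $Z_\alpha$ meets $\delta^{-1}(0)$ and $\delta(Z_\alpha)\subset(\ker\Nm)_0=A\subset\widetilde K$; as $Z_\alpha\to N$ is dominant with $p\neq q$ generically, $v$ then identifies a moving family of pairs and is not generically injective. Hence birationality forces $\pi$ unramified. To obtain cyclicity I use that $m$ is prime: the monodromy group $G\subset S_m$ is transitive of prime degree, so the largest quotient $Q=G/\langle\langle G_1\rangle\rangle$ on which $G$ acts freely has order dividing $m$ and is therefore either $\Z/m$ or trivial. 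In the cyclic case $Q\cong\Z/m$ we are done; in the trivial case $\ker\pi^*\cong\widehat Q=0$, so $\pi^*$ is injective, and by the duality $\#\pi_0(\ker\Nm)=\#\ker\pi^*$ the group $\ker\Nm=A$ is connected, whence every $c_\alpha=0\in\overline K$ and $v$ is generically $m$-to-one, contradicting birationality. Thus $\pi$ is a cyclic \'etale cover with attached $\eta\in(JN_0)_m$. I expect this passage, namely ruling out the non-cyclic case via the primality of $m$ together with the duality $\#\pi_0(\ker\Nm)=\#\ker\pi^*$, to be the main conceptual obstacle.

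For the converse (2) I run the construction explicitly for $\pi\colon N=\mathbf{Spec}(\O_{N_0}\oplus\cdots\oplus\eta^{-(m-1)})\to N_0$ with deck group $\langle\sigma\rangle$, writing $g_0=g(N_0)$. Here $\ker\pi^*=\langle\eta\rangle\cong\Z/m$ and $\Nm\pi^*=[m]$; computing the induced polarization through $\bar\pi^*\lambda_B=m\lambda_{N_0}$ (with $\bar\pi\colon JN_0\to B$ of degree $m$) gives $\#\ker\lambda_B=m^{2g_0-2}$, polarization type $(1,m,\dots,m)$, and hence $\#\ker\mu_B=m^2$, so that $\ker\mu_B$ is a symplectic plane over the field $\Z/m$ and a maximal totally isotropic $K$ is a line. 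The off-diagonal components are the graphs $Z_i=\{(p,\sigma^i p)\}$ with $\delta$-classes $c_i=i\,c_1$, the nonzero elements of $\pi_0(\ker\Nm)\cong\Z/m$; a computation shows $\pi_0(\ker\Nm)$ is one isotropic line of $\ker\mu_B$, so I take $K$ to be any of the remaining $m$ lines, giving $\overline K=K\cap\pi_0(\ker\Nm)=0$ and hence, by the criterion, $v$ birational.

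It remains to identify $X=\widehat B/K$ with $JN_0/[m]^{-1}\langle\eta\rangle$. The clean half is that $[m]\colon JN_0\to JN_0$ induces an isomorphism $JN_0/[m]^{-1}\langle\eta\rangle\cong JN_0/\langle\eta\rangle=B$, so the target is abstractly $B$. To match it with $X$ I compare the two natural isogenies between $JN_0$ and $X$: the map $\phi=u\circ\pi^*\colon JN_0\to X$ (where $u\colon JN\to X$ is the quotient) and a norm-type map $X\to JN_0$ built from $u^t$ and $\Nm$, computing their kernels with the explicit $K$ chosen above; the point is that, once $K$ is the line complementary to $\pi_0(\ker\Nm)$, the relevant kernel is exactly $[m]^{-1}\langle\eta\rangle$, and the principal polarization of $X$ transports to the asserted one. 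This kernel bookkeeping, together with checking compatibility of the polarizations, is the computational heart of the statement, and the same identification then applies verbatim to the cover produced in part (1).
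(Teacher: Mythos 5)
Your birationality criterion is stated as an equivalence, but you only justify one direction, namely ``some $c_\alpha\in\overline K$ implies $v$ is not birational''. The converse --- which is exactly what you invoke in part (2) to conclude that $\overline K=0$ gives birationality --- does not follow from an analysis of $N\times_{N_0}N$ alone. Since $v(p)=v(q)$ iff $[p-q]\in\widetilde K$, and since your chosen $K$ is transverse to $\pi_0(\ker\Nm)$, the set $\widetilde K$ is \emph{not} contained in $\ker\Nm$: a positive-dimensional family of identifications could a priori occur with $\pi(p)\neq\pi(q)$, namely whenever $[\pi(p)-\pi(q)]$ equals a fixed nonzero element of $\Nm(\widetilde K)=\overline{\Nm}(K)=\langle\eta\rangle$. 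Such pairs lie outside the fibre product and are invisible to your components $Z_\alpha$. The paper closes precisely this case: from $v(x)=v(y)$ it deduces $\pi^{-1}(\pi(x))\sim\pi^{-1}(\pi(y))$, and if $\pi(x)\neq\pi(y)$ these two distinct fibres of $\pi$ span a $g^1_m$ on $N$; were this to happen for a moving $x$, the pencil would consist of fibres of $\pi$ and force $N_0\cong\P^1$. You must supply this step (or an equivalent one, e.g.\ that the Abel--Jacobi curve $W_1\subset\Pic^1(N_0)$ has trivial stabilizer, so $\{t: t+k\eta\ \text{effective}\}$ is finite for $k\eta\neq 0$) before your criterion can be used in the ``if'' direction.

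The second weak point is that the isomorphism $X\cong JN_0/[m]^{-1}\langle\eta\rangle$ --- which you yourself call the computational heart --- is announced but not carried out, and in part (1) it must hold for \emph{every} admissible $K$, not only the one you construct in part (2), so ``applies verbatim'' is not accurate as written. The paper gets it in two lines from its commutative diagram: $X\cong JN_0/(\lambda_B\circ\pi^*)^{-1}(K)=JN_0/[m]^{-1}\bigl(\overline{\Nm}(K)\bigr)$, and $\overline{\Nm}(K)=\langle\eta\rangle$ for any admissible $K=\langle a\bar{\xi}+bP_1\rangle$ with $m\nmid a$, because $\overline{\Nm}(\bar{\xi})=\eta$ and $\overline{\Nm}(P_1)=0$. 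By contrast, your part (1) is sound and uses only the valid direction of your criterion; it is also a genuinely different route from the paper's, which deduces everything from Birkenhake--Lange 11.4.3 ($\ker\pi^*=0$ unless $\pi$ is cyclic \'etale, whence $\ker\mu_B=0$, $K=0$, and $v=\Nm\circ\mathrm{AJ}$ is visibly $m$-to-one), whereas you handle ramification by a local analysis of the fibre product at a branch point and cyclicity by a monodromy argument; both are acceptable, yours being more self-contained and the paper's shorter.
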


\begin{proof}
We have the following commutative diagram of morphisms between abelian varieties. The central vertical morphism $\overline{\Nm}$ is induced from the norm map $\Nm\colon JN\to JN_0$. All the maps with labels, as well as the quotient map $B'\to B'/K=X$, are isogenies. 
$$
\xymatrix@R=1em{
 & JN \ar[r] & JN/\ker \Nm^0 & \\
 JN_0 \ar[r]^{\pi^*} \ar[ddrr]_{[m]} & \pi^*JN_0=B \ar[r]^{\lambda_B} \ar@{^{(}->}[u]& B'\ar@{=}[u] \ar[r] \ar@/^1pc/[dddd]^{\mu_B} \ar[dd]_{\overline{\Nm}} &B'/K=X \\
 &&\\
 && JN_0\ar[dd]_{\pi^*}\\
 &&\\
 && \pi^*JN_0=B
}
$$
The central horizontal row of the diagram gives $X\cong JN_0/(\lambda_B\circ \pi^*)^{-1}(K)$. Also from the diagram we have $(\lambda_B\circ \pi^*)^{-1}(K)=[m]^{-1}\bigl(\overline{\Nm}(K)\bigr)$. Hence \[
 X\cong JN_0/[m]^{-1}\bigl(\overline{\Nm}(K)\bigr). \]
 
By \cite[Proposition~11.4.3]{BirLan}, if $m$ is prime then  \[
\ker(\pi^*\colon JN_0\to JN)=\begin{cases} \langle \eta\rangle, &\parbox{44ex}{if $\pi\colon N=\mathbf{Spec}(\mathcal{O}_{N_0}\oplus \eta^{-1})\to N_0$ is an unramified cyclic cover defined by $\eta\in (JN_0)_m$;} \\
 0, &\text{otherwise.}
\end{cases} \]
If $\ker(\pi^*\colon JN_0\to JN)=0$, then $\ker(\Nm\colon JN\to JN_0)$ is connected since $\pi^*=\Nm^\vee$. This means that the two vertical maps $\overline{\Nm}\colon B'\to JN_0$ and $\pi^*\colon JN_0\to \pi^* JN_0$ in the commutative diagram are isomorphisms, so their composition $\mu_B$ is also an isomorphism. Hence $K=\ker \mu_B=0$. It follows that $X=B'\cong JN_0$, and the map $v\colon N\to X$ is the composition of the Abel-Jacobi map $N\hookrightarrow JN$ followed by the norm map $\Nm\colon JN\to JN_0=X$. But this $v$ is clearly not birational onto its image. Hence if $m$ is prime and $v$ is birational onto its image, then $\pi\colon N\to N_0$ must be an unramified cyclic cover.
 
Let $m$ be any positive integer, $\eta$ be any $m$-torsion line bundle on $N_0$, and let $\pi\colon N=\mathbf{Spec}(\mathcal{O}_{N_0}\oplus \eta^{-1})\to N_0$ be the unramified cyclic cover associated to $\eta$. Then there is an automorphism $\sigma$ of $N$ of order~$m$ such that $N_0=N/\langle\sigma\rangle$. The kernel of the norm map $\Nm\colon JN\to JN_0$ has $m$ connected components $P_0,\ldots,P_{m-1}$, where \[
 P_\ell=\{\textstyle \mathcal{O}_N\bigl(\sum n_i(x_i-\sigma(x_i))\bigr)\mid x_i\in N,\ n_i\in\mathbb{Z},\ \sum n_i \equiv \ell\ \ (\mbox{mod}\ m)\}. \]
Hence $\ker(\overline{\Nm}\colon B'=JN/P_0\to JN_0)=\{P_0,\ldots,P_{m-1}\}$ is a cyclic group of order~$m$ generated by $P_1$. Since $\mu_B=\pi^*\circ \overline{\Nm}$, and $\ker\pi^*=\langle\eta\rangle$ is also a cyclic group of order~$m$, $\ker\mu_B\cong (\mathbb{Z}/m\mathbb{Z})^2$. More precisely, pick any $\xi\in JN$ such that $\Nm(\xi)=\eta$, and denote by $\bar{\xi}$ its image in $B'=JN/P_0$. Then an isomorphism $(\mathbb{Z}/m\mathbb{Z})^2\to \ker\mu_B$ can be defined by sending $(1,0)$ to $\bar{\xi}$ and $(0,1)$ to $P_1$. If $m$ is prime, every proper subgroup of $(\mathbb{Z}/m\mathbb{Z})^2$ is cyclic, so maximal totally isotropic subgroups of $\ker\mu_B$ are precisely those subgroups of the form $K=\langle a\bar{\xi}+bP_1 \rangle$ where $a$ and $b$ are integers with $\gcd(a,b,m)=1$. If $m$ is not necessarily prime, one can at least see that $K=\langle \bar{\xi} \rangle$ is a maximal totally isotropic subgroup of $\ker\mu_B$.

We next analyze which of these maximal totally isotropic subgroups $K$ of $\ker\mu_B$ give rise to morphisms $v\colon N\to X$ that are birational onto their images. Recall that $v\colon N \to X$ is the composition \[
 v\colon N\hookrightarrow JN\to JN/P_0=B'\to B'/K=X \]
of the Abel-Jacobi map $N\hookrightarrow JN$ followed by the quotient maps. So for two distinct points $x,y\in N$,
\begin{align*}
v(x)=v(y) &\implies \mathcal{O}_N(x-y)\in\langle P_0,\xi,P_1 \rangle\text{ in }JN\\
&\implies \mathcal{O}_{N_0}\bigl(\pi(x)-\pi(y)\bigr)=\Nm\bigl(\mathcal{O}_N(x-y)\bigr)\in\langle\eta \rangle\\
&\implies \mathcal{O}_N\bigl(\pi^{-1}(\pi(x))-\pi^{-1}(\pi(y))\bigr)=\pi^*\mathcal{O}_{N_0}\bigl(\pi(x)-\pi(y)\bigr)=0.
\end{align*}
If $v$ is not birational onto its image, then for all but finitely many points $x\in N$, there exists $y\in N$ different from $x$ such that the divisors $\pi^{-1}(\pi(x))$ and $\pi^{-1}(\pi(y))$ are linear equivalent. If $\pi(x)\ne\pi(y)$ then $\pi^{-1}(\pi(x))$ and $\pi^{-1}(\pi(y))$ generate a $g^1_m$ on $N$. This can only happen for at most finitely many $x$, for otherwise the $g^1_m$ consists of fibers of $\pi\colon N\to N_0$ and $N_0\cong \mathbb{P}^1$. Hence $\pi(x)=\pi(y)$, i.e. $y=\sigma^\ell(x)$ for some $\ell\in\mathbb{Z}/m\mathbb{Z}$. Since \[
 \mathcal{O}_N(x-y)=\mathcal{O}_N\bigl(x-\sigma^\ell(x)\bigr)=\mathcal{O}_N\Bigl(\textstyle\sum\limits_{i=0}^{\ell-1} \sigma^i(x)-\sigma\bigl(\sigma^i(x)\bigr)\Bigr)\in P_\ell, \]
the image $\overline{\mathcal{O}_N(x-y)}$ of $\mathcal{O}_N(x-y)$ in $B'=JN/P_0$ is equal to $P_\ell=\ell P_1$. It follows that $v(x)=v(y)$ in $X=B'/K$ if and only if $\ell P_1\in K$ in $B'$. Hence \[
\text{$v$ is birational onto its image} \iff \ell P_1\notin K \text{ for all }\ell\ne 0. \]
Thus if $m$ is prime and $v$ is birational onto its image, then $K=\langle a\bar{\xi}+bP_1 \rangle$ for some integers $a$ and $b$ with $m\nmid a$, and hence \[
X\cong JN_0/[m]^{-1}\bigl(\overline{\Nm}(K)\bigr)=JN_0/[m]^{-1}\langle\eta\rangle. \]
Conversely, given any integer $m>0$ and any $m$-torsion line bundle $\eta$ on $N_0$, pick any $\xi\in JN$ such that $\Nm(\xi)=\eta$, and let $K=\langle \bar{\xi}\rangle$. Then $K$ is a maximal totally isotropic subgroup of $\ker \mu_B$ such that $v\colon N \to X$ is birational onto its image and $X\cong JN_0/[m]^{-1}\langle\eta\rangle$.
\end{proof}

If $N_0$ is a curve of genus~$g$, and $\pi\colon N\to N_0$ is an unramified cyclic cover of degree~$m$, then the genus of $N$ is $mg-m+1$. So Proposition~\ref{p:jow} implies the following

\begin{mycor}
For any integers $m>0$ and $g>1$, the locus of ppav with an $m$-minimal curve of genus $mg-m+1$ is at least $3g-3$.
\end{mycor}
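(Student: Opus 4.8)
The plan is to produce an explicit $(3g-3)$-dimensional family inside the stated locus out of Proposition~\ref{p:jow}(2), and then to check that the map parametrizing it is generically finite onto its image. Let $\oR_{g,m}$ denote the moduli space of pairs $(N_0,\eta)$ where $N_0$ is a smooth projective curve of genus $g$ and $\eta$ is an $m$-torsion line bundle on $N_0$ of order exactly $m$, so that the associated cyclic cover is connected of degree $m$. The forgetful map $\oR_{g,m}\to\M_g$ is a finite étale cover, its fibres being subsets of the finite group $(JN_0)_m\cong(\Z/m\Z)^{2g}$; hence $\dim\oR_{g,m}=3g-3$. To each such pair, Proposition~\ref{p:jow}(2) attaches the unramified cyclic cover $\pi\colon N\to N_0$ of degree $m$ defined by $\eta$, together with a ppav $X\cong JN_0/[m]^{-1}\langle\eta\rangle$ of dimension $g$ carrying an $m$-minimal morphism $v\colon N\to X$ that is birational onto its image. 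As $\pi$ is unramified of degree $m$, Riemann--Hurwitz gives $g(N)=mg-m+1$, so $X$ lies in the locus $\mathcal L\subset\A_g$ of ppav admitting an $m$-minimal curve of genus $mg-m+1$ (for $m=1$ this is just the Jacobian locus of Section~\ref{sec:quot}). This defines a morphism $\Phi\colon\oR_{g,m}\to\A_g$ with image contained in $\mathcal L$.

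It then suffices to show that $\Phi$ has finite fibres, since this yields $\dim\mathcal L\ge\dim\overline{\Phi(\oR_{g,m})}=\dim\oR_{g,m}=3g-3$. To bound a fibre $\Phi^{-1}(X)$ I would reconstruct $(N_0,\eta)$ from $X$ up to finitely many choices. Because $[m]\colon JN_0\to JN_0$ maps $[m]^{-1}\langle\eta\rangle$ onto $\langle\eta\rangle$ with kernel $(JN_0)_m\subset[m]^{-1}\langle\eta\rangle$, there is an isomorphism of abelian varieties $X\cong JN_0/\langle\eta\rangle$; thus the quotient map $f\colon JN_0\to X$ is an isogeny with $\ker f\cong\Z/m\Z$. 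Let $\hat f\colon X\to JN_0$ be the complementary isogeny satisfying $\hat f\circ f=[m]_{JN_0}$; then $\ker\hat f$ is a subgroup of $X_m\cong(\Z/m\Z)^{2g}$, so $JN_0\cong X/\ker\hat f$ is one of the finitely many quotients of $X$ by a subgroup of its $m$-torsion. Only finitely many abelian varieties $JN_0$ thus occur; since an abelian variety carries only finitely many principal polarizations up to isomorphism and each determines at most one curve by the Torelli theorem, only finitely many $N_0$ arise, and for each there are only finitely many $\eta\in(JN_0)_m$. Hence $\Phi^{-1}(X)$ is finite, which proves the corollary.

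The main obstacle is exactly this finiteness of fibres, i.e. the reconstruction of $(N_0,\eta)$ from $X$ alone. The delicate point is that $X$ does not determine $JN_0$ on the nose but only as a quotient by a finite group, and one must also be careful that the principal polarization of $X$ produced by Welters' construction is \emph{not} the naive quotient polarization coming from $JN_0$, so the isogeny $f$ need not be polarized. What makes the argument go through regardless is that $f$ has kernel of bounded order (in fact $\cong\Z/m\Z$), so the complementary isogeny $\hat f$ exhibits $JN_0$ as one of finitely many $m$-torsion quotients of $X$ as an \emph{unpolarized} abelian variety, after which the finiteness of principal polarizations together with Torelli and the finiteness of $(JN_0)_m$ complete the count. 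I do not expect $\dim\mathcal L$ to equal $3g-3$ in general; the content of the corollary is only the lower bound furnished by this single explicit family, which is why the statement asserts ``at least''.
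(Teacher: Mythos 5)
Your family is the same one the paper uses: both parametrize pairs $(N_0,\eta)$ with $\eta$ of exact order $m$, observe that this space is finite \'etale over $\M_g$ and hence of dimension $3g-3$, and feed the associated unramified cyclic cover into Welters' construction to produce, via Proposition~\ref{p:jow}(2), a ppav $X\cong JN_0/[m]^{-1}\langle\eta\rangle$ with a birational $m$-minimal curve of genus $mg-m+1$. Where you genuinely diverge is in showing that the resulting map to $\A_g$ preserves dimension. The paper factors the map through a chain of intermediate moduli spaces of polarized abelian varieties --- the Torelli map (generically an immersion), the quotient $JN_0\mapsto JN_0/\langle\eta\rangle$, the dual map, and the quotient by a maximal totally isotropic subgroup (Proposition~\ref{dimquot}) --- and checks that each step is \'etale or a generic immersion. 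You instead bound the fibres of the composite map directly: from $X\cong JN_0/[m]^{-1}\langle\eta\rangle\cong JN_0/\langle\eta\rangle$ you recover $JN_0$ as $X/\ker\hat f$ for the complementary isogeny $\hat f$, so $JN_0$ is one of the finitely many quotients of $X$ by a subgroup of $X_m$; the finiteness of principal polarizations on a fixed abelian variety up to isomorphism (Narasimhan--Nori) together with global Torelli and the finiteness of $(JN_0)_m$ then leave only finitely many $(N_0,\eta)$ in each fibre. Your identification $JN_0/[m]^{-1}\langle\eta\rangle\cong JN_0/\langle\eta\rangle$ and the existence of $\hat f$ with $\ker\hat f=f\bigl((JN_0)_m\bigr)\subset X_m$ are both correct, and you rightly flag that the isogeny $f$ need not respect the polarizations, which is exactly why the finiteness-of-polarizations input is needed. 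The trade-off is clear: the paper's route needs infinitesimal Torelli and \'etaleness of the intermediate maps but stays entirely inside polarized moduli, while yours avoids any tangent-space arguments at the cost of the (standard but nontrivial) finiteness theorem for polarizations. Both arguments are sound and yield the same lower bound $3g-3$.
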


\begin{proof}
Let $\oR_{g,m,r}$ be the moduli space of maps $f:N\to N_0$ where:
\begin{itemize}
\item  $N$ and $N_0$ are smooth connected curves:
\item   the curve $N_0$ is of genurs $g$;
\item  the map $f$ is of degree $m$ and is ramified over a reduced divisor of degree $r$.
\end{itemize}
 The dimension of $\oR_{g,m,r}$ is $3g-3+r$. We assume in this section that $r=0$.

Let $\M_{g}^{1/m}$ be the moduli space of objects pairs $(N_0,L)$ where $N_0$ is a smooth curve of genus $g$ and $L$ is a line bundle of $N_0$ such that $L^{\otimes m}\simeq \mathcal{O}_{N_0}$. Then $\oR_{g,m}$ is a connected component of  $\M_g^{1/m}$ corresponding to pairs $(N_0,L)$ such that: $L^{\otimes d}\neq \O_{N_0}$ for all $d | m$ and $d\neq m$. We recall that the Torelli map is generically an immersion. Therefore the composition of maps from $\oR_{g,m}$ to $A_{g,(1,m,\ldots,m)}$
$$
(N_0,L) \mapsto (JN_0, L) \mapsto JN_0/\langle L\rangle=\pi^*JN_0
$$
conserve the dimensions. The dual map $A\to \hat{A}$ from $A_{g,(1,m,\ldots,m)}$ to $A_{g,(1,\ldots,1,m)}$ is \'etale. Therefore the map which associates $JN/P$ to $(N_0,L)\in \oR_{g,m}$ conserves the dimension. Finally using Proposition\ref{dimquot}, the map which associates to $(X,K)$ the ppav $X/K$ where $K$ is a m.t.i. subgroup of $X_m$ is \'etale. 

Therefore the locus of abelian varieties with an $m$-minimal curve of genus $mg-m+1$ is at least $3g-3$.
\end{proof}

\section{Quotient of Prym varieties}

The last category of examples has been extensively studied. Let $(N_0\to N)\in \oR_{g,n,r}$ be a map of degree $m$ then the Prym variety associated to this map is equal to $(\ker\Nm)_0$. Let $K$ be a m.t.i. subgroup of the kernel of $\ker(\mu_P)$ where $\mu_P$ is the induced polarization on the dual of $P$. Then the quotient $\hat{P}/K$ is $m$-minimal. In \cite{LanOrt} the authors proved that the map $\oR_{g,n,r}\to A_{m(g-1)+1+r/2,\delta}$ is generically finite. Because the dual map and the quotient by m.t.i. conserve the dimensions we have that the locus obtained by this construction is of the same dimension as $\oR_{g,n,r}$.

In particular we can easily check that the dimension of the locus of $m$-minimal abelian varieties in $A_g$ that we obtain with this construction is of dimension at most $2(g-1+m)-3$. Moreover we can check that the genus of the curve in the $m$-minimal class has genus at most $2g+1$.


\begin{thebibliography}{1}

\bibitem{Unif6}
Alexeev, Donagi, Farkas, Izadi, and Ortega.
\newblock The uniformization of the moduli space of principally polarized
  abelian 6-folds.
\newblock 2015, arXiv:1507.05710.

\bibitem{Bea}
Arnaud Beauville.
\newblock Prym varieties and the {S}chottky problem.
\newblock {\em Invent. Math.}, 41(2):149--196, 1977.

\bibitem{BirLan}
Christina Birkenhake and Herbert Lange.
\newblock {\em Complex abelian varieties}, volume 302 of {\em Grundlehren der
  Mathematischen Wissenschaften [Fundamental Principles of Mathematical
  Sciences]}.
\newblock Springer-Verlag, Berlin, second edition, 2004.

\bibitem{Don}
Ron Donagi.
\newblock The unirationality of $\A_5$.
\newblock {\em Ann. of Math. (2)}, 119(2):269--307, 1984.

\bibitem{Kan1}
V.~Kanev.
\newblock Principal polarizations of {P}rym-{T}jurin varieties.
\newblock {\em Compositio Math.}, 64(3):243--270, 1987.

\bibitem{Kan}
Vassil Kanev.
\newblock Hurwitz spaces of {G}alois coverings of {${\Bbb P}^1$}, whose
  {G}alois groups are {W}eyl groups.
\newblock {\em J. Algebra}, 305(1):442--456, 2006.

\bibitem{LanOrt}
Lange and Ortega.
\newblock Prym varieties of cyclic coverings.
\newblock 2010, arXiv:0805.1020.

\bibitem{Mum}
David Mumford.
\newblock {\em Abelian varieties}, volume~5 of {\em Tata Institute of
  Fundamental Research Studies in Mathematics}.
\newblock Published for the Tata Institute of Fundamental Research, Bombay; by
  Hindustan Book Agency, New Delhi, 2008.
\newblock With appendices by C. P. Ramanujam and Yuri Manin, Corrected reprint
  of the second (1974) edition.

\bibitem{Wel}
G.~E. Welters.
\newblock Curves of twice the minimal class on principally polarized abelian
  varieties.
\newblock {\em Nederl. Akad. Wetensch. Indag. Math.}, 49(1):87--109, 1987.

\bibitem{mat}
 Matsusaka, T.
\newblock On a characterization of a Jacobian variety.
\newblock Mem. College Sci. Univ. Kyoto Ser. A Math. 1--19n, 1959.

\end{thebibliography}
\end{document}